\patchcmd\Gread@eps{\@inputcheck#1 }{\@inputcheck"#1"\relax}{}{}
\newtheorem{theorem}{Theorem}[section]
\newtheorem{proposition}[theorem]{Proposition}
\newtheorem{conjecture}[theorem]{Conjecture}
\newtheorem{remark}[theorem]{Remark}
\newcommand{\qed}{\hfill $\square$\medskip}
\begin{document}

\title{Secure domination number of $k$-subdivision of graphs}

\author{
Nima Ghanbari
}

\date{May 2, 2023}

\maketitle

\begin{center}
Department of Informatics, University of Bergen, P.O. Box 7803, 5020 Bergen, Norway\\
\bigskip

{\tt Nima.ghanbari@uib.no }
\end{center}


\begin{abstract}
Let $G=(V,E)$ be a simple graph. A dominating set of $G$ is a subset $D\subseteq V$ such that every vertex not in $D$ is adjacent to at least one vertex in $D$.
The cardinality of a smallest dominating set of $G$, denoted by $\gamma(G)$, is the domination number of $G$. A dominating set $D$ is called a secure dominating set of $G$, if for every $u\in V-D$, there exists a vertex $v\in D$ such that $uv \in E$ and $(D-\{v\})\cup\{u\}$ is  a dominating set of $G$. The cardinality of a smallest secure dominating set of $G$, denoted by $\gamma_s(G)$, is the secure domination number of $G$.
 For any $k \in \mathbb{N}$, the $k$-subdivision of $G$ is a simple graph $G^{\frac{1}{k}}$ which is constructed by replacing each edge of $G$ with a path of length $k$. 
In this paper, we study the secure domination number of $k$-subdivision of $G$. 
\end{abstract}

\noindent{\bf Keywords:} domination number, secure dominating set, $k$-subdivision.

\medskip
\noindent{\bf AMS Subj.\ Class.:} 05C38, 05C69, 05C75, 05C76

\section{Introduction}

Let $G = (V,E)$ be a simple graph with $n$ vertices. Throughout this paper we consider only simple graphs.  A set $D\subseteq V(G)$ is a  dominating set if every vertex in $V(G)- D$ is adjacent to at least one vertex in $D$.
The  domination number $\gamma(G)$ is the minimum cardinality of a dominating set in $G$. There are various domination numbers in the literature.
For a detailed treatment of domination theory, the reader is referred to \cite{domination}.

\medskip 

Cockayne et al. introduced the concept of secure domination number \cite{Coc} in 2004. By their definition,
a dominating set $D$ is called a secure dominating set of $G$, if for every $u\in V-D$, there exists a vertex $v\in D$ such that $uv \in E$ and $(D-\{v\})\cup\{u\}$ is  a dominating set of $G$. The cardinality of a smallest secure dominating set of $G$, denoted by $\gamma_s(G)$, is the secure domination number of $G$. 

\medskip 

Secure domination number widely is studied in literature. In 2005, Mynhardt et al.  used a simple constructive characterisation of $\gamma$-excellent trees to obtain a constructive characterisation of trees with equal domination and secure domination numbers, where a graph $G$ is said to be $\gamma$-excellent if each vertex of $G$ is contained in some minimum dominating set of $G$ \cite{Myn}. Later, Cockayne found a sharp upper bound as $\frac{\Delta n +\Delta -1}{3\Delta -1}$, for trees  with $n$ vertices and maximum degree $\Delta \geq 3$ \cite{Coc1}. In 2008, Burger et al. by using vertex cover, showed in \cite{Bur} that if $G$ is a connected graph of order $n$ with minimum degree at least two
that is not a 5-cycle, then {{$\gamma _s(G)\leq \frac{n}{2}$}}. Merouane et al. in \cite{Mer}, showed that the problem of computing the secure domination number is in the NP-complete class, even when restricted to bipartite graphs and split graphs.  Araki et al. proposed a linear-time algorithm for finding the secure domination number of proper interval graphs in 2018 \cite{Ara}. Recently, Mohamed Ali et al. obtained the secure domination number of zero-divisor graphs \cite{Moh}. More results on secure domination number can be found in \cite{Ara1,Cas,Gro,Kis,Klo,LiXu}.

\medskip 

{{A \textit{ path  graph} is a graph whose vertices can be listed in the order $v_1, v_2, \ldots, v_n$ such that the edges are $v_i v_{i+1}$, where $i = 1, 2, \ldots, n-1$. A \textit{bipartite graph} is a set of graph vertices decomposed into two disjoint sets such that no two graph vertices within the same set are adjacent. A \textit{complete bipartite graph} is a bipartite graph such that every pair of graph vertices in the two sets are adjacent. If there are $n$ and $m$ graph vertices in the two sets, the complete bipartite graph is denoted $K_{n,m}$. A \textit{star} $S_k$ is the complete bipartite graph $K_{1,{k-1}}$.}}

\medskip 

The \textit{ $k$-subdivision} of $G$, denoted by $G^{\frac{1}{k}}$, is constructed by replacing each edge $v_iv_j$ of $G$ with a path of length $k$, say $P^{\{v_i,v_j\}}$. These $k$-paths are called \textit{superedges}, any new vertex is an internal vertex, and is denoted by $x^{\{v_i,v_j\}}_l$ if it belongs to the superedge {{$P^{\{v_i,v_j\}}$}},  with  distance $l$ from the vertex $v_i$, where $l \in \{1, 2, \ldots , k-1\}$.  Note that for $k = 1$, we have $G^{1/1}= G^1 = G$, and if  $G$ has $n$ vertices and $m$ edges, then the graph $G^{\frac{1}{k}}$ has $n+(k-1)m$ vertices and $km$ edges. Some results about subdivision of a graph can be found in \cite{ALikhani,ALikhani1,Babu}.

\medskip

In this paper, {{we study the secure domination number, and present some results on it where the graph is modified by  $k$-subdivision. }}

\section{Main Results}

In this section, we study the secure domination number of $k$-subdivision of  a graph. {{In the following, by $n$ consecutive vertices, we mean that we have  path of order $n$ as a subgarph of the graph}}. First we state some known results.

	\begin{proposition}\cite{Coc}\label{COC-pro}
For any graph $G$, $\gamma(G)\leq\gamma_s(G)$.
	\end{proposition}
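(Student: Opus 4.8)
The plan is to derive the inequality directly from the definitions, since the containment is essentially built into the notion of a secure dominating set. First I would recall that, by the definition quoted in the introduction, a set $D\subseteq V$ is declared a secure dominating set only if it is \emph{already} a dominating set of $G$; the secure condition (that for every $u\in V-D$ there is a neighbour $v\in D$ with $D-\{v\}\cup\{u\}$ dominating) is an \emph{additional} requirement layered on top of domination. Thus the family of secure dominating sets is a subfamily of the family of dominating sets.

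Next I would take a smallest secure dominating set $D$, so that $|D|=\gamma_s(G)$ by definition of the secure domination number. Because $D$ is in particular a dominating set, it is a valid candidate in the minimization defining $\gamma(G)$, and hence $\gamma(G)\leq |D|$. Combining these two facts gives
\[
\gamma(G)\leq |D|=\gamma_s(G),
\]
which is exactly the claimed inequality.

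There is essentially no genuine obstacle here: the whole content is the observation that every secure dominating set is a dominating set, so the minimum over the smaller (secure) family cannot be less than the minimum over the larger (all dominating sets) family. The only point requiring care is to state explicitly that the secure condition presupposes domination, so that no separate verification that $D$ dominates $G$ is needed.
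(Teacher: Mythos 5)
Your argument is correct: the paper states this proposition as a known result from Cockayne et al.\ without giving a proof, and your derivation --- every secure dominating set is by definition a dominating set, so the minimum over the smaller family is at least $\gamma(G)$ --- is exactly the standard justification. Nothing is missing.
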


	\begin{theorem}\cite{Coc}\label{COC}
{{ Let $P_n$ be a path graph  with $n$ vertices}}. Then $\gamma_s(P_n)=\lceil \frac{3n}{7} \rceil$.
	\end{theorem}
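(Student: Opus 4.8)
The plan is to prove the two inequalities $\gamma_s(P_n)\le\lceil 3n/7\rceil$ and $\gamma_s(P_n)\ge\lceil 3n/7\rceil$ separately: the first by an explicit periodic construction, and the second by a local analysis of the gaps between consecutive chosen vertices. Throughout I label the vertices $v_1,\dots,v_n$ along the path.

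For the upper bound I would work with the pattern that places guards at the vertices whose index runs through a repeating block of gaps $(2,2,3)$, i.e. guards at indices congruent to $\{0,2,4\}\pmod 7$; this set has density $3/7$. First I would check on the doubly infinite path that it is securely dominating: it suffices to verify, for each of the four residue classes of a \emph{non}-guard (namely $1,3,5,6\pmod 7$), that some adjacent guard can be slid onto it while the set stays dominating, and each such verification reduces to observing that the relevant neighbouring gap has length at most $2$. Having settled the infinite pattern, I would truncate it to $P_n$ and repair the two ends, treating $n\bmod 7$ in cases; the repaired set has cardinality $\lceil 3n/7\rceil$, which gives the upper bound.

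For the lower bound let $D$ be a minimum secure dominating set, list its elements as $v_{p_1}<\cdots<v_{p_m}$, and set $d_i=p_{i+1}-p_i$. Domination forces $d_i\le 3$ (a gap of $4$ leaves a middle vertex undominated) together with the end conditions $p_1\le 2$ and $p_m\ge n-1$. The secure condition yields two further local constraints, each obtained by asking which guard may legally step onto the vertex that would otherwise be lost: (a) if $d_i=3$ then both $d_{i-1}\le 2$ and $d_{i+1}\le 2$, since the two interior non-guards of a length-$3$ gap can only be defended from the outside; and (b) a length-$2$ gap cannot be flanked by two length-$3$ gaps. In particular the patterns $3,3$ and $3,2,3$ are forbidden.

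To extract the density I would run a discharging argument on the gap sequence: assign each gap the charge $3d_i-7$ (so a $3$ carries $+2$, a $2$ carries $-1$, a $1$ carries $-4$) and let every $3$ send charge $+1$ to each of its two neighbours. Constraint (a) guarantees those neighbours have length at most $2$, and constraint (b) guarantees that a length-$2$ gap receives at most one unit, so after discharging every gap carries nonpositive charge apart from a bounded contribution at the two ends. Hence $3\sum_i d_i-7(m-1)\le C$ for an absolute constant $C$; combining with $\sum_i d_i=p_m-p_1\ge n-3$ gives $m\ge 3n/7$ up to the end bookkeeping, and integrality of $m$ upgrades this to $m\ge\lceil 3n/7\rceil$. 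The main obstacle is precisely this last step: turning the qualitative rules (a)--(b) into the sharp constant $7/3$ with the exactly correct additive treatment of the two end-segments, so that the bound is tight for \emph{every} residue of $n$ modulo $7$ rather than merely asymptotically.
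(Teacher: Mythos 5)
The paper offers no proof of this statement: it is imported verbatim from Cockayne et al.\ \cite{Coc}, so there is no internal argument to compare yours against. Your overall strategy (a periodic construction with gap block $2,2,3$ for the upper bound, and a gap-sequence/discharging analysis for the lower bound) is the standard route to this result, and the upper-bound half is sound modulo the deferred end repairs (note that even for $n=7$ the raw truncation $\{v_2,v_4,v_7\}$ fails because $v_3$ cannot be defended, while $\{v_2,v_4,v_6\}$ works, so the repair step is genuinely needed for every residue, not only the ragged ones).

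The lower bound contains a genuine gap, and it is not the mere ``additive bookkeeping'' you flag at the end: the constraints you extract --- $d_i\le 3$, $p_1\le 2$, $p_m\ge n-1$, rule (a) forbidding $3,3$, and rule (b) forbidding $3,2,3$ --- are \emph{not sufficient} to force $m\ge\lceil 3n/7\rceil$. Concretely, in $P_{10}$ the set $\{v_2,v_4,v_7,v_9\}$ has gap sequence $2,3,2$ and satisfies every condition you list with $m=4<5=\lceil 30/7\rceil$, yet it is not a secure dominating set: $v_3$ cannot be defended, since moving $v_2$ onto $v_3$ abandons $v_1$ and moving $v_4$ onto $v_3$ abandons $v_5$. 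What is missing are the endpoint analogues of your rules: if $p_1=2$ then $d_1\le 2$, and if $p_1=2$ and $d_1=2$ then $d_2\le 2$ (the left end acts as a virtual gap of length $p_1+1$ that participates in both (a) and (b)), and symmetrically on the right. Without these extra forbidden configurations your discharging inequality can only yield $m\ge\lceil(3n-2)/7\rceil$, which falls short of $\lceil 3n/7\rceil$ by exactly one whenever $n\equiv 3$ or $5\pmod 7$. With them, placing virtual guards at positions $-1$ and $n+2$ and rerunning your discharging shows the two end gaps each retain at most $+1$ of excess charge, giving $3(n+3)-7(m+1)\le 2$, i.e.\ $m\ge 3n/7$, which integrality upgrades to the claimed bound; so the fix is a pair of additional local lemmas about the ends, not just arithmetic.
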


Now, we {{consider}} the $2$-subdivision of a graph and present an upper bound for its secure domination number.

	\begin{theorem}\label{G12}
{{Let $G$ be a graph which is not a star}}. Then,
 $$\gamma_s(G^{\frac{1}{2}})\leq \min \{ |E(G)|, |V(G)|\}.$$
	\end{theorem}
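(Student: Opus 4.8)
The plan is to exhibit two explicit secure dominating sets of $G^{\frac{1}{2}}$, one of size $|V(G)|$ and one of size $|E(G)|$, and then take whichever is smaller. Throughout, for an edge $e=v_iv_j$ write $x_e := x_1^{\{v_i,v_j\}}$ for the unique internal vertex that $2$-subdivision inserts on $e$; recall that in $G^{\frac{1}{2}}$ the vertex $x_e$ has degree $2$ with neighbours exactly $v_i$ and $v_j$, while each original vertex $v$ is adjacent precisely to the vertices $x_f$ for the edges $f$ incident with $v$. Thus $G^{\frac{1}{2}}$ is bipartite with parts $V(G)$ and $\{x_e : e\in E(G)\}$, and I will use this incidence structure repeatedly.

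First I would handle the bound $\gamma_s(G^{\frac{1}{2}})\leq |V(G)|$, which I expect to hold with no restriction on $G$. Take $D=V(G)$. It is dominating, since every $x_e$ with $e=v_iv_j$ is adjacent to $v_i,v_j\in D$. For security, given any $x_e\notin D$ with $e=v_iv_j$, I would swap in $x_e$ and swap out $v_i$, forming $D'=(D\setminus\{v_i\})\cup\{x_e\}$. The only vertex whose domination is threatened is $v_i$, and it is now dominated by $x_e\in D'$; every other original vertex stays in $D'$, and every remaining $x_f$ still has at least one of its two (distinct) endpoints in $D'$. Hence $D'$ dominates $G^{\frac{1}{2}}$ and $D$ is secure.

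Next I would prove $\gamma_s(G^{\frac{1}{2}})\leq |E(G)|$ using $D=\{x_e : e\in E(G)\}$. This set dominates every original vertex (each has an incident edge, assuming $G$ has no isolated vertices), and it is the relevant bound exactly when $|E(G)|<|V(G)|$. For security I must show that for each original $v\notin D$ there is an incident edge $e=vw$ such that $D'=(D\setminus\{x_e\})\cup\{v\}$ is dominating. Removing $x_e$ can only endanger its neighbours $v$ and $w$: the vertex $v$ lies in $D'$, and $w$ is dominated in $D'$ provided $w$ has a second incident edge $f$, since then $x_f\in D'$. So the swap succeeds as long as $v$ has a neighbour $w$ with $\deg_G(w)\geq 2$. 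Here is where the hypothesis enters: if every neighbour of $v$ were a leaf, then the component of $v$ would be a star centred at $v$, so for a connected graph that is not a star every vertex has a neighbour of degree at least two, and the construction goes through.

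I expect the main obstacle to be precisely this secure-swap verification for the $|E(G)|$ set, namely ruling out that removing a subdivision vertex strands a leaf-neighbour with no remaining dominator; this is exactly the failure that occurs at the centre of a star and motivates the exclusion. A secondary technical point is the standing assumption that $G$ has no isolated vertices (so that the subdivision-vertex set is dominating), and the mild care needed if $G$ is allowed to be disconnected, where a star \emph{component} could reintroduce the same leaf obstruction; in the connected non-star case both constructions are clean and the stated bound follows by comparing $|V(G)|$ and $|E(G)|$.
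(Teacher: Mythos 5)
Your proposal is correct, and it rests on the same two candidate sets the paper uses, namely $V(G)$ and the set $\{x_e : e\in E(G)\}$ of subdivision vertices; the difference lies entirely in how security is verified, and there your route is genuinely different and considerably more solid. The paper argues locally: it asserts that every ``part'' of a non-star graph sits inside a copy of $P_4$ or $C_3$, checks that the three subdivision vertices securely dominate $H_1^{\frac{1}{2}}$ and $H_2^{\frac{1}{2}}$, and then claims these local certificates glue to a global secure dominating set, while the $|V(G)|$ bound is simply stated with no swap exhibited. You instead verify both sets globally through the bipartite incidence structure of $G^{\frac{1}{2}}$ and name the exact defender in each case: swap $v_i$ for $x_e$ when defending a subdivision vertex, and swap $x_{vw}$ for $v$ where $w$ is a neighbour of $v$ with $\deg_G(w)\geq 2$ when defending an original vertex. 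This makes transparent precisely where the non-star hypothesis is used (the centre of a star has no neighbour of degree two, so the edge-set swap strands a leaf), something the paper buries inside its unproved decomposition claim. Your closing caveats are also substantive rather than cosmetic: the subdivision-vertex set fails to dominate isolated vertices, and a disconnected $G$ containing a star component defeats the $|E(G)|$ bound even though such a $G$ is ``not a star'' --- for instance $G=K_{1,2}\cup P_4$ has $\min\{|E(G)|,|V(G)|\}=5$ but $\gamma_s(G^{\frac{1}{2}})=\gamma_s(P_5)+\gamma_s(P_7)=3+3=6$ --- so the theorem implicitly requires $G$ to be connected. In short, the paper's gadget decomposition is shorter to state but leaves the gluing step and the hypotheses unexamined; your global swap argument closes both gaps.
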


	\begin{proof}
Suppose that $G$ is a graph which is not star. {{ Since $G$ is not a star, then three consecutive vertices $u$, $v$, and $w$ are either in a cycle of order 3, or in a path of order 4. So we have $H_1$ and $H_2$, as we see in Figure \ref{P4C3}, as subgraphs of $G$}}. Now, we {{consider}}  $G^{\frac{1}{2}}$. Then {{each subgraph of  $G^{\frac{1}{2}}$ with six consecutive vertices, is either}} $H_1^{\frac{1}{2}}$ or $H_2^{\frac{1}{2}}$ (Figure \ref{P4C3}). {{For both cases}}, one can easily check that, $\{1,2,3\}$ is a secure dominating set for subgraphs. Now, by applying our choices {{ to all subgraphs with six consecutive vertices}} of the graph, we don't consider any vertices of $G$ in {{our set which its size is  $|E(G)|$. By}} our argument, this is a secure dominating set. On the other hand, if we consider $V(G)$ as our set for $G^{\frac{1}{2}}$, then it is a secure dominating set too.  Therefore {{$\gamma_s(G^{\frac{1}{2}})\leq \min \{ |E(G)|, |V(G)|\}$}}.
	\qed
	\end{proof}

	\begin{figure}
		\begin{center}
			\psscalebox{0.5 0.5}
{
\begin{pspicture}(0,-7.265)(16.54139,2.365)
\psdots[linecolor=black, dotsize=0.4](0.20138885,0.015)
\psdots[linecolor=black, dotsize=0.4](2.601389,0.015)
\psdots[linecolor=black, dotsize=0.4](5.001389,0.015)
\psdots[linecolor=black, dotsize=0.4](7.4013886,0.015)
\psdots[linecolor=black, dotsize=0.4](13.401389,1.615)
\psdots[linecolor=black, dotsize=0.4](11.001389,-0.785)
\psdots[linecolor=black, dotsize=0.4](15.801389,-0.785)
\psdots[linecolor=black, dotsize=0.4](0.20138885,-5.185)
\psdots[linecolor=black, dotsize=0.4](2.601389,-5.185)
\psdots[linecolor=black, dotsize=0.4](5.001389,-5.185)
\psdots[linecolor=black, dotsize=0.4](7.4013886,-5.185)
\psdots[linecolor=black, dotsize=0.4](13.401389,-3.585)
\psdots[linecolor=black, dotsize=0.4](11.001389,-5.985)
\psdots[linecolor=black, dotsize=0.4](15.801389,-5.985)
\psline[linecolor=black, linewidth=0.08](0.20138885,0.015)(7.4013886,0.015)(7.4013886,0.015)
\psline[linecolor=black, linewidth=0.08](0.20138885,-5.185)(7.4013886,-5.185)(7.4013886,-5.185)
\psline[linecolor=black, linewidth=0.08](13.401389,1.615)(11.001389,-0.785)(15.801389,-0.785)(13.401389,1.615)(13.401389,1.615)
\psline[linecolor=black, linewidth=0.08](13.401389,-3.585)(11.001389,-5.985)(15.801389,-5.985)(13.401389,-3.585)(13.401389,-3.585)
\psdots[linecolor=black, dotstyle=o, dotsize=0.4, fillcolor=white](1.4013889,-5.185)
\psdots[linecolor=black, dotstyle=o, dotsize=0.4, fillcolor=white](3.8013887,-5.185)
\psdots[linecolor=black, dotstyle=o, dotsize=0.4, fillcolor=white](6.201389,-5.185)
\psdots[linecolor=black, dotstyle=o, dotsize=0.4, fillcolor=white](12.201389,-4.785)
\psdots[linecolor=black, dotstyle=o, dotsize=0.4, fillcolor=white](14.601389,-4.785)
\psdots[linecolor=black, dotstyle=o, dotsize=0.4, fillcolor=white](13.401389,-5.985)
\rput[bl](3.4213889,-1.665){$H_1$}
\rput[bl](13.161388,-1.645){$H_2$}
\rput[bl](3.561389,-7.185){$H_1^{\frac{1}{2}}$}
\rput[bl](13.281389,-7.265){$H_2^{\frac{1}{2}}$}
\rput[bl](0.021388855,0.535){u}
\rput[bl](2.4613888,0.515){v}
\rput[bl](4.8213887,0.515){w}
\rput[bl](7.3213887,-4.765){x}
\rput[bl](7.3213887,0.555){x}
\rput[bl](0.041388854,-4.845){u}
\rput[bl](2.4613888,-4.785){v}
\rput[bl](4.8413887,-4.745){w}
\rput[bl](1.3013889,-4.725){1}
\rput[bl](3.6613889,-4.685){2}
\rput[bl](6.041389,-4.725){3}
\rput[bl](11.601389,-4.685){1}
\rput[bl](14.921389,-4.625){2}
\rput[bl](13.261389,-5.585){3}
\rput[bl](13.301389,2.175){u}
\rput[bl](10.221389,-0.845){v}
\rput[bl](16.22139,-0.785){w}
\rput[bl](13.241389,-3.005){u}
\rput[bl](10.381389,-6.105){v}
\rput[bl](16.241388,-6.045){w}
\psdots[linecolor=black, dotstyle=o, dotsize=0.4, fillcolor=white](0.20138885,0.015)
\psdots[linecolor=black, dotstyle=o, dotsize=0.4, fillcolor=white](2.601389,0.015)
\psdots[linecolor=black, dotstyle=o, dotsize=0.4, fillcolor=white](5.001389,0.015)
\psdots[linecolor=black, dotstyle=o, dotsize=0.4, fillcolor=white](7.4013886,0.015)
\psdots[linecolor=black, dotstyle=o, dotsize=0.4, fillcolor=white](13.401389,1.615)
\psdots[linecolor=black, dotstyle=o, dotsize=0.4, fillcolor=white](15.801389,-0.785)
\psdots[linecolor=black, dotstyle=o, dotsize=0.4, fillcolor=white](11.001389,-0.785)
\psdots[linecolor=black, dotstyle=o, dotsize=0.4, fillcolor=white](0.20138885,-5.185)
\psdots[linecolor=black, dotstyle=o, dotsize=0.4, fillcolor=white](2.601389,-5.185)
\psdots[linecolor=black, dotstyle=o, dotsize=0.4, fillcolor=white](5.001389,-5.185)
\psdots[linecolor=black, dotstyle=o, dotsize=0.4, fillcolor=white](7.4013886,-5.185)
\psdots[linecolor=black, dotstyle=o, dotsize=0.4, fillcolor=white](13.401389,-3.585)
\psdots[linecolor=black, dotstyle=o, dotsize=0.4, fillcolor=white](15.801389,-5.985)
\psdots[linecolor=black, dotstyle=o, dotsize=0.4, fillcolor=white](11.001389,-5.985)
\psdots[linecolor=black, dotsize=0.4](12.201389,-4.785)
\psdots[linecolor=black, dotsize=0.4](14.601389,-4.785)
\psdots[linecolor=black, dotsize=0.4](13.401389,-5.985)
\psdots[linecolor=black, dotsize=0.4](1.4013889,-5.185)
\psdots[linecolor=black, dotsize=0.4](3.8013887,-5.185)
\psdots[linecolor=black, dotsize=0.4](6.201389,-5.185)
\end{pspicture}
}
		\end{center}
		\caption{Subgraphs $H_1$ and $H_2$ in the proof of Theorem \ref{G12}} \label{P4C3}
	\end{figure}
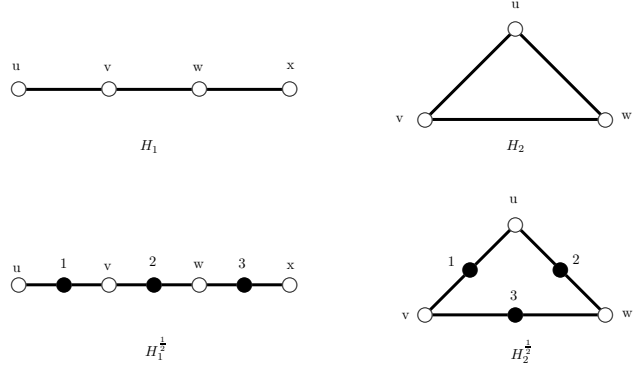

\medskip

The  condition that $G$ is not a star in Theorem \ref{G12}, is necessary.  {{Since a pendant vertex or its neighbour should be in a dominating set, then the dominating set with smallest size for $G^{\frac{1}{2}}$, where $G$ is a star, is the number of edges by choosing $x_1^{\{u,v\}}$ for all $uv\in E(G)$. Note that we have other options to have such a dominating {{set, but none of them}} are secure dominating sets. Therefore we have the following result for star graphs:}}

\begin{proposition}
For star graphs $S_n=K_{1,n-1}$, $\gamma_s(S_n^{\frac{1}{2}})=n$.
\end{proposition}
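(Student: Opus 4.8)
The plan is to work directly with the explicit structure of $S_n^{\frac{1}{2}}$. Write $c$ for the centre of the star and $v_1,\dots,v_{n-1}$ for its leaves; the $2$-subdivision replaces each edge $cv_i$ by a path of length two whose internal vertex I call $x_i$, so that $c$ is joined to $x_i$ and $x_i$ to $v_i$. Thus $S_n^{\frac{1}{2}}$ is a spider on $2n-1$ vertices built from the $n-1$ arms through $c,x_i,v_i$; here $\deg(c)=n-1$, each $x_i$ has degree $2$, and each leaf $v_i$ has degree $1$ with unique neighbour $x_i$. I would prove the two inequalities $\gamma_s(S_n^{\frac{1}{2}})\le n$ and $\gamma_s(S_n^{\frac{1}{2}})\ge n$ separately.

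For the upper bound I would exhibit the set $D=\{c\}\cup\{v_1,\dots,v_{n-1}\}$, which has cardinality $n$. It is dominating because $c$ dominates every $x_i$ while each leaf lies in $D$. To check security, note that the only vertices outside $D$ are the $x_i$; for a given $x_i$ I would swap it with its leaf neighbour $v_i$ and verify that $D'=(D\setminus\{v_i\})\cup\{x_i\}$ is again dominating, since $c$ still dominates all $x_j$, the newly added $x_i$ dominates $v_i$, and every other leaf remains in $D'$. This establishes $\gamma_s(S_n^{\frac{1}{2}})\le n$ and is routine.

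The substance of the argument is the lower bound, and this is the step I expect to be the main obstacle. First I would record that $\gamma(S_n^{\frac{1}{2}})=n-1$: the closed neighbourhoods $N[v_i]=\{v_i,x_i\}$ are pairwise disjoint, so any dominating set meets each of them and hence has size at least $n-1$, while $\{x_1,\dots,x_{n-1}\}$ attains this. By Proposition \ref{COC-pro} it therefore suffices to rule out a secure dominating set of size exactly $n-1$. Any such set $D$ would be a minimum dominating set, so by the disjointness above it must contain exactly one vertex of each pair $\{x_i,v_i\}$ and nothing else; in particular $c\notin D$, and to dominate $c$ at least one $x_j$ must lie in $D$, which forces $v_j\notin D$. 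The crucial point is then that $c$ cannot be secured: its only neighbours in $D$ are such vertices $x_j$, and replacing any one of them by $c$ leaves the leaf $v_j$, whose unique neighbour is exactly $x_j$, undominated, so no admissible swap produces a dominating set. This contradiction gives $\gamma_s(S_n^{\frac{1}{2}})\ge n$, and combined with the upper bound yields equality. The delicate part is the bookkeeping that pins down the form of a size-$(n-1)$ dominating set and shows that the obstruction at the centre $c$ is unavoidable for every such set.
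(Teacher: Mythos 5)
Your proof is correct and complete: the upper bound via $D=\{c,v_1,\dots,v_{n-1}\}$ and the lower bound via the $n-1$ pairwise disjoint closed neighbourhoods $N[v_i]=\{v_i,x_i\}$, followed by the observation that any $(n-1)$-vertex dominating set cannot secure the centre $c$ without abandoning some leaf $v_j$, are all sound. The paper itself offers no proof of this proposition (it is dismissed as ``an easy argument''), so your write-up simply supplies the missing details; there is no divergence of method to report.
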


In the following, we show that both of $|V(G)|$ and $|E(G)|$ can be sharp upper bounds for Theorem \ref{G12}:

\begin{remark}
The upper bound in the Theorem \ref{G12} is {{attainable}}. First {{consider}} the path graph $P_4$. Then $\gamma_s(P_4^{\frac{1}{2}})\leq 3$ which is the number of edges of $P_4$. Also $P_4^{\frac{1}{2}}=P_7$, and by Theorem \ref{COC}, $\gamma_s(P_7)=3=|E(P_4)|$. Now, consider graph $G$, as shown in Figure \ref{graphG} {{(wheel graph)}}. {{Then}} $\{u_1,u_2,\ldots,u_7\}$ is a secure dominating set for $G^{\frac{1}{2}}$, {{and there is no set with smaller size}}, since {{in each three consecutive vertices in a path at least one vertex is in the dominating set}}. Therefore $\gamma_s(G^{\frac{1}{2}})=7=|V(G)|$.
\end{remark}

	\begin{figure}
		\begin{center}
			\psscalebox{0.5 0.5}
{
\begin{pspicture}(0,-8.225)(17.48,-0.075)
\psline[linecolor=black, linewidth=0.08](2.16,-0.725)(6.16,-0.725)(7.36,-3.525)(6.16,-6.325)(2.16,-6.325)(0.96,-3.525)(2.16,-0.725)(2.16,-0.725)
\psline[linecolor=black, linewidth=0.08](2.16,-0.725)(6.16,-6.325)(6.16,-6.325)
\psline[linecolor=black, linewidth=0.08](7.36,-3.525)(0.96,-3.525)(0.96,-3.525)
\psline[linecolor=black, linewidth=0.08](6.16,-0.725)(2.16,-6.325)(2.56,-6.325)
\psdots[linecolor=black, dotsize=0.4](2.16,-0.725)
\psdots[linecolor=black, dotsize=0.4](6.16,-0.725)
\psdots[linecolor=black, dotsize=0.4](7.36,-3.525)
\psdots[linecolor=black, dotsize=0.4](6.16,-6.325)
\psdots[linecolor=black, dotsize=0.4](4.16,-3.525)
\psdots[linecolor=black, dotsize=0.4](0.96,-3.525)
\psdots[linecolor=black, dotsize=0.4](2.16,-6.325)
\rput[bl](1.58,-0.325){$u_1$}
\rput[bl](6.28,-0.345){$u_2$}
\rput[bl](7.88,-3.665){$u_3$}
\rput[bl](6.5,-6.945){$u_4$}
\rput[bl](1.72,-6.925){$u_5$}
\rput[bl](0.0,-3.665){$u_6$}
\rput[bl](4.0,-3.005){$u_7$}
\psline[linecolor=black, linewidth=0.08](11.36,-0.725)(15.36,-0.725)(16.56,-3.525)(15.36,-6.325)(11.36,-6.325)(10.16,-3.525)(11.36,-0.725)(11.36,-0.725)
\psline[linecolor=black, linewidth=0.08](11.36,-0.725)(15.36,-6.325)(15.36,-6.325)
\psline[linecolor=black, linewidth=0.08](16.56,-3.525)(10.16,-3.525)(10.16,-3.525)
\psline[linecolor=black, linewidth=0.08](15.36,-0.725)(11.36,-6.325)(11.76,-6.325)
\psdots[linecolor=black, dotsize=0.4](11.36,-0.725)
\psdots[linecolor=black, dotsize=0.4](15.36,-0.725)
\psdots[linecolor=black, dotsize=0.4](16.56,-3.525)
\psdots[linecolor=black, dotsize=0.4](15.36,-6.325)
\psdots[linecolor=black, dotsize=0.4](13.36,-3.525)
\psdots[linecolor=black, dotsize=0.4](10.16,-3.525)
\psdots[linecolor=black, dotsize=0.4](11.36,-6.325)
\rput[bl](10.78,-0.325){$u_1$}
\rput[bl](15.48,-0.345){$u_2$}
\rput[bl](17.08,-3.665){$u_3$}
\rput[bl](15.7,-6.945){$u_4$}
\rput[bl](10.92,-6.925){$u_5$}
\rput[bl](9.2,-3.665){$u_6$}
\rput[bl](13.2,-3.005){$u_7$}
\psdots[linecolor=black, dotstyle=o, dotsize=0.4, fillcolor=white](13.36,-0.725)
\psdots[linecolor=black, dotstyle=o, dotsize=0.4, fillcolor=white](13.36,-6.325)
\psdots[linecolor=black, dotstyle=o, dotsize=0.4, fillcolor=white](11.76,-3.525)
\psdots[linecolor=black, dotstyle=o, dotsize=0.4, fillcolor=white](14.96,-3.525)
\psdots[linecolor=black, dotstyle=o, dotsize=0.4, fillcolor=white](14.38,-2.085)
\psdots[linecolor=black, dotstyle=o, dotsize=0.4, fillcolor=white](12.34,-2.085)
\psdots[linecolor=black, dotstyle=o, dotsize=0.4, fillcolor=white](15.94,-2.145)
\psdots[linecolor=black, dotstyle=o, dotsize=0.4, fillcolor=white](15.98,-4.945)
\psdots[linecolor=black, dotstyle=o, dotsize=0.4, fillcolor=white](14.32,-4.925)
\psdots[linecolor=black, dotstyle=o, dotsize=0.4, fillcolor=white](12.34,-4.945)
\psdots[linecolor=black, dotstyle=o, dotsize=0.4, fillcolor=white](10.76,-4.905)
\psdots[linecolor=black, dotstyle=o, dotsize=0.4, fillcolor=white](10.78,-2.105)
\rput[bl](13.22,-0.345){1}
\rput[bl](16.26,-2.025){2}
\rput[bl](16.26,-5.245){3}
\rput[bl](13.26,-6.925){4}
\rput[bl](10.06,-5.145){5}
\rput[bl](10.12,-2.125){6}
\rput[bl](12.6,-2.065){7}
\rput[bl](14.74,-2.165){8}
\rput[bl](15.16,-3.925){9}
\rput[bl](13.62,-5.125){10}
\rput[bl](12.54,-5.085){11}
\rput[bl](11.56,-3.205){12}
\rput[bl](13.32,-8.225){$G^{\frac{1}{2}}$}
\rput[bl](3.98,-8.165){$G$}
\end{pspicture}
}
		\end{center}
		\caption{Graph $G$ and $G^{\frac{1}{2}}$} \label{graphG}
	\end{figure}
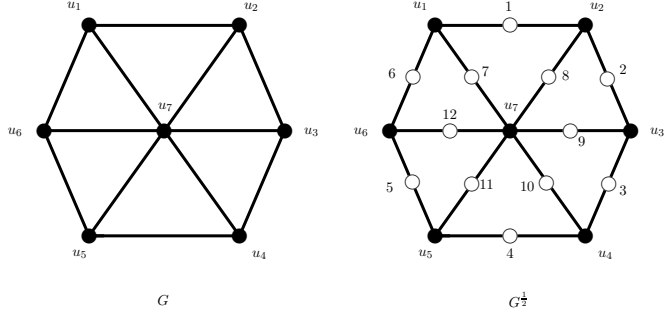

Now, we {{consider}} $G^{\frac{1}{3}}$ and present an upper and lower bound for secure domination number of {{it}}.

	\begin{theorem}\label{G13}
Let $G$ be a graph. Then,
 $$|V(G)| \leq \gamma_s(G^{\frac{1}{3}})\leq  2|E(G)|.$$
	\end{theorem}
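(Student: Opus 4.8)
The plan is to prove the two inequalities separately, both by exploiting the precise local structure of a superedge. Recall that in $G^{\frac{1}{3}}$ every edge $\{v_i,v_j\}$ of $G$ is replaced by a path $v_i - x_1^{\{v_i,v_j\}} - x_2^{\{v_i,v_j\}} - v_j$, so there are exactly $2|E(G)|$ internal vertices. Two facts will drive everything: each internal vertex is adjacent to exactly \emph{one} original vertex of $G$, and no two original vertices are adjacent in $G^{\frac{1}{3}}$ (they sit at distance $3$ along any superedge). This is precisely where the choice $k=3$ matters; for $k=2$ the single internal vertex of a superedge abuts two original vertices and the argument below would fail.

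For the lower bound I would not even invoke the secure property. By Proposition \ref{COC-pro} it suffices to show $\gamma(G^{\frac{1}{3}})\ge |V(G)|$, i.e.\ that every dominating set $S$ of $G^{\frac{1}{3}}$ has at least $|V(G)|$ elements. I would run a charging argument: assign one unit of charge to each original vertex, for a total of $|V(G)|$, and route it into $S$ as follows. If $v_i\in S$, leave its charge on $v_i$; if $v_i\notin S$, then since $S$ dominates $v_i$ and all neighbours of $v_i$ are internal vertices, send the charge to one internal neighbour of $v_i$ lying in $S$. Now each original vertex of $S$ receives only its own charge (distinct original vertices are nonadjacent, so none sends charge to another), and each internal vertex of $S$ receives at most one unit, since it is adjacent to a single original vertex. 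Hence every element of $S$ receives at most one unit, forcing $|S|\ge |V(G)|$.

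For the upper bound I would exhibit an explicit secure dominating set, namely $D=\{\text{all internal vertices}\}$, of size $2|E(G)|$. First, $D$ dominates: assuming $G$ has no isolated vertex, every $v_i$ has an incident superedge and hence an internal neighbour in $D$, while every internal vertex lies in $D$ itself. For the secure condition, the vertices outside $D$ are exactly the original vertices; given $v_i\notin D$, choose an incident superedge, let $y$ be its internal vertex adjacent to $v_i$ and $z$ the other internal vertex of that superedge. Taking $v=y$, I would verify that $D-\{y\}\cup\{v_i\}$ is again dominating: the added vertex $v_i$ dominates itself and $y$, the partner $z$ remains in $D$ and still dominates itself and the far endpoint of the superedge, and no other vertex relied solely on $y$. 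Thus $D$ is a secure dominating set and $\gamma_s(G^{\frac{1}{3}})\le 2|E(G)|$.

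The arguments are short, so the only genuine care lies in two places. In the upper bound, the swap $D-\{y\}\cup\{v_i\}$ must leave no vertex undominated, which succeeds precisely because the two internal vertices of each superedge guard one another. The subtler point is the degenerate case of isolated vertices of $G$: if $G$ has one, then $D$ fails to dominate it and the bound $2|E(G)|$ can break (for instance, $K_2$ together with an isolated vertex), so the hypothesis should be read with $\delta(G)\ge 1$. The lower bound, by contrast, is robust and hinges entirely on the injectivity observation that each internal vertex of $G^{\frac{1}{3}}$ touches a single original vertex.
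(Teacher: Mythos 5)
Your proof is correct and follows essentially the same route as the paper: the lower bound via $\gamma(G^{\frac{1}{3}})\ge |V(G)|$ combined with Proposition \ref{COC-pro}, and the upper bound via the set of all $2|E(G)|$ internal vertices. In fact you supply the two justifications the paper leaves as bare assertions --- the injectivity/charging argument showing no dominating set of $G^{\frac{1}{3}}$ has fewer than $|V(G)|$ vertices, and the explicit verification that the swap $D-\{y\}\cup\{v_i\}$ still dominates --- and your observation that the upper bound implicitly requires $G$ to have no isolated vertices (e.g.\ $K_2\cup K_1$ violates $\gamma_s(G^{\frac{1}{3}})\le 2|E(G)|$) identifies a hypothesis the paper omits.
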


	\begin{proof}
For every edge $uv\in E(G)$, we have a superedge $P^{\{u,v\}}$ with vertices $u$, $x_1^{\{u,v\}}$, $x_2^{\{u,v\}}$ and $v$ in $G^{\frac{1}{3}}$. To have a dominating set in four consecutive vertices, we need at least two vertices. By choosing $u$ and $v$, then we have a dominating set for $G^{\frac{1}{3}}$ with size $|V(G)|$. {{It is easy to see that there is no dominating set for $G^{\frac{1}{3}}$ with smaller size}}. Now, by Proposition \ref{COC-pro}, we have $\gamma_s(G^{\frac{1}{3}})\geq |V(G)| $. By choosing $x_1^{\{u,v\}}$ and $x_2^{\{u,v\}}$ in any superedge $P^{\{u,v\}}$,  we have a secure dominating set with size $2|E(G)|$. Therefore $\gamma_s(G^{\frac{1}{3}})\leq 2|E(G)| $ and we have the result. 
	\qed
	\end{proof}

\begin{remark}
{{The lower bound in the Theorem \ref{G13} is sharp}}. It suffices to consider star graph $S_n$. As we see in Figure \ref{graphSn3}, the set of white vertices in $S_n^{\frac{1}{3}}$ is a secure dominating set. Then $\gamma_s(S_n^{\frac{1}{3}})=n=|V(S_n)|$.
{{The upper bound in the Theorem \ref{G13} is attainable}}. It suffices to consider path graph $P_2$. Then $P_2^{\frac{1}{3}}=P_4$ and $\gamma_s(P_4)=2=2|E(P_2)|$.
\end{remark}

	\begin{figure}
		\begin{center}
			\psscalebox{0.5 0.5}
{
\begin{pspicture}(0,-5.3014426)(15.594231,0.89567304)
\psline[linecolor=black, linewidth=0.08](2.5971153,-1.7014422)(0.19711533,0.69855773)(0.19711533,0.69855773)
\psline[linecolor=black, linewidth=0.08](2.5971153,-1.7014422)(2.5971153,0.69855773)(2.5971153,0.69855773)
\psline[linecolor=black, linewidth=0.08](2.5971153,-1.7014422)(0.19711533,-1.7014422)(0.19711533,-1.7014422)
\psline[linecolor=black, linewidth=0.08](2.5971153,-1.7014422)(4.997115,0.69855773)(4.997115,0.69855773)
\psline[linecolor=black, linewidth=0.08](2.5971153,-1.7014422)(4.997115,-1.7014422)(4.997115,-1.7014422)
\psline[linecolor=black, linewidth=0.08](2.5971153,-1.7014422)(4.997115,-4.1014423)(4.997115,-4.1014423)
\psline[linecolor=black, linewidth=0.08](2.5971153,-1.7014422)(2.5971153,-4.1014423)(2.5971153,-4.1014423)
\psdots[linecolor=black, dotsize=0.1](1.7971153,-3.3014421)
\psdots[linecolor=black, dotsize=0.1](1.3971153,-2.9014423)
\psdots[linecolor=black, dotsize=0.1](0.9971153,-2.5014422)
\psdots[linecolor=black, dotsize=0.4](0.19711533,-1.7014422)
\psdots[linecolor=black, dotsize=0.4](0.19711533,0.69855773)
\psdots[linecolor=black, dotsize=0.4](2.5971153,0.69855773)
\psdots[linecolor=black, dotsize=0.4](4.997115,0.69855773)
\psdots[linecolor=black, dotsize=0.4](4.997115,-1.7014422)
\psdots[linecolor=black, dotsize=0.4](4.997115,-4.1014423)
\psdots[linecolor=black, dotsize=0.4](2.5971153,-4.1014423)
\psline[linecolor=black, linewidth=0.08](12.997115,-1.7014422)(10.5971155,0.69855773)(10.5971155,0.69855773)
\psline[linecolor=black, linewidth=0.08](12.997115,-1.7014422)(12.997115,0.69855773)(12.997115,0.69855773)
\psline[linecolor=black, linewidth=0.08](12.997115,-1.7014422)(10.5971155,-1.7014422)(10.5971155,-1.7014422)
\psline[linecolor=black, linewidth=0.08](12.997115,-1.7014422)(15.397116,0.69855773)(15.397116,0.69855773)
\psline[linecolor=black, linewidth=0.08](12.997115,-1.7014422)(15.397116,-1.7014422)(15.397116,-1.7014422)
\psline[linecolor=black, linewidth=0.08](12.997115,-1.7014422)(15.397116,-4.1014423)(15.397116,-4.1014423)
\psline[linecolor=black, linewidth=0.08](12.997115,-1.7014422)(12.997115,-4.1014423)(12.997115,-4.1014423)
\psdots[linecolor=black, dotsize=0.1](12.197115,-3.3014421)
\psdots[linecolor=black, dotsize=0.1](11.797115,-2.9014423)
\psdots[linecolor=black, dotsize=0.1](11.397116,-2.5014422)
\psdots[linecolor=black, dotsize=0.4](10.5971155,-1.7014422)
\psdots[linecolor=black, dotsize=0.4](10.5971155,0.69855773)
\psdots[linecolor=black, dotsize=0.4](12.997115,0.69855773)
\psdots[linecolor=black, dotsize=0.4](15.397116,0.69855773)
\psdots[linecolor=black, dotsize=0.4](15.397116,-1.7014422)
\psdots[linecolor=black, dotsize=0.4](15.397116,-4.1014423)
\psdots[linecolor=black, dotsize=0.4](12.997115,-4.1014423)
\psdots[linecolor=black, dotsize=0.4](12.197115,-0.9014423)
\psdots[linecolor=black, dotsize=0.4](2.5971153,-1.7014422)
\psdots[linecolor=black, dotsize=0.4](12.997115,-0.9014423)
\psdots[linecolor=black, dotsize=0.4](13.797115,-0.9014423)
\psdots[linecolor=black, dotsize=0.4](13.797115,-1.7014422)
\psdots[linecolor=black, dotsize=0.4](13.797115,-2.5014422)
\psdots[linecolor=black, dotsize=0.4](12.997115,-2.5014422)
\psdots[linecolor=black, dotsize=0.4](12.197115,-1.7014422)
\psdots[linecolor=black, dotstyle=o, dotsize=0.4, fillcolor=white](11.397116,-0.10144226)
\psdots[linecolor=black, dotstyle=o, dotsize=0.4, fillcolor=white](12.997115,-0.10144226)
\psdots[linecolor=black, dotstyle=o, dotsize=0.4, fillcolor=white](14.5971155,-0.10144226)
\psdots[linecolor=black, dotstyle=o, dotsize=0.4, fillcolor=white](14.5971155,-1.7014422)
\psdots[linecolor=black, dotstyle=o, dotsize=0.4, fillcolor=white](14.5971155,-3.3014421)
\psdots[linecolor=black, dotstyle=o, dotsize=0.4, fillcolor=white](12.997115,-3.3014421)
\psdots[linecolor=black, dotstyle=o, dotsize=0.4, fillcolor=white](11.397116,-1.7014422)
\psdots[linecolor=black, dotstyle=o, dotsize=0.4, fillcolor=white](12.997115,-1.7014422)
\rput[bl](12.5971155,-5.301442){$S_n^{\frac{1}{3}}$}
\rput[bl](2.1971154,-5.301442){$S_n$}
\end{pspicture}
}
		\end{center}
		\caption{Graph $S_n$ and $S_n^{\frac{1}{3}}$} \label{graphSn3}
	\end{figure}
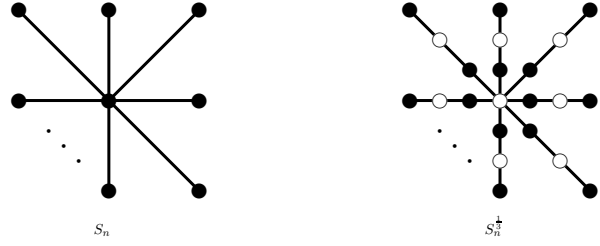

The following theorem gives the {{bounds}} of secure domination number of $4$-subdivision of  a graph. 

{{

	\begin{theorem}\label{G14}
	Let $G$ be a graph
	\begin{itemize}
	\item[(i)]
 which is not $P_2$. Then,
 $ \gamma_s(G^{\frac{1}{4}})\leq    2|E(G)|$.	
	\item[(ii)]
	with no pendant vertices. Then 
	 $ \gamma_s(G^{\frac{1}{4}})\leq    |V(G)|+|E(G)|$.
	\end{itemize}

	\end{theorem}

	\begin{proof}
		\begin{itemize}
	\item[(i)]
	 In any superedge $P^{\{u,v\}}$ in $G^{\frac{1}{4}}$, we choose $x_1^{\{u,v\}}$ and $x_3^{\{u,v\}}$ to put in our set $D$. {{It}} is a secure dominating set, because each vertex in $V-D$ can be replaced by its neighbour in $D$, if it is not $x_2^{\{u,v\}}$, and $x_2^{\{u,v\}}$ can be replaced by the neighbour which has another neighbour of degree at least 2, since $G$ is not $P_2$. Therefore we have the result.
	\item[(ii)]
	The set $D=V(G)\cup C$ is a secure dominating set, where $C$ is the central vertices of the new paths of length 4. 
	\qed	
	\end{itemize}
	\end{proof}	
}}

\medskip

Now, we {{consider}} $G^{\frac{1}{5}}$, and present upper and lower bound for secure domination number of that regarding maximum degree and number of edges of $G$.

	\begin{figure}
		\begin{center}
			\psscalebox{0.7 0.7}
{
\begin{pspicture}(0,-6.475)(17.6,2.975)
\rput[bl](5.6,2.725){$u_1$}
\rput[bl](7.2,0.325){$u_2$}
\rput[bl](0.0,-1.675){$w$}
\rput[bl](5.6,-6.475){$u_{\Delta}$}
\rput[bl](7.2,-3.675){$u_3$}
\psline[linecolor=black, linewidth=0.08](0.8,-1.675)(2.0,-1.275)(3.2,-0.875)(4.4,-0.475)(5.6,-0.075)(6.8,0.325)(6.8,0.325)
\psline[linecolor=black, linewidth=0.08](0.8,-1.675)(5.2,2.725)(5.2,2.725)
\psline[linecolor=black, linewidth=0.08](0.8,-1.675)(5.2,-6.075)(5.2,-6.075)
\psdots[linecolor=black, dotsize=0.1](6.58,-4.635)
\psdots[linecolor=black, dotsize=0.1](6.44,-4.895)
\psdots[linecolor=black, dotsize=0.1](6.22,-5.135)
\psdots[linecolor=black, dotstyle=o, dotsize=0.4, fillcolor=white](5.2,2.725)
\psdots[linecolor=black, dotstyle=o, dotsize=0.4, fillcolor=white](6.8,0.325)
\psdots[linecolor=black, dotstyle=o, dotsize=0.4, fillcolor=white](5.2,-6.075)
\rput[bl](15.6,2.725){$u_1$}
\rput[bl](17.2,0.325){$u_2$}
\rput[bl](10.0,-1.675){$w$}
\rput[bl](15.6,-6.475){$u_{\Delta}$}
\rput[bl](17.2,-3.675){$u_3$}
\psline[linecolor=black, linewidth=0.08](10.8,-1.675)(12.0,-1.275)(13.2,-0.875)(14.4,-0.475)(15.6,-0.075)(16.8,0.325)(16.8,0.325)
\psline[linecolor=black, linewidth=0.08](10.8,-1.675)(15.2,2.725)(15.2,2.725)
\psline[linecolor=black, linewidth=0.08](10.8,-1.675)(15.2,-6.075)(15.2,-6.075)
\psdots[linecolor=black, dotsize=0.1](16.58,-4.635)
\psdots[linecolor=black, dotsize=0.1](16.44,-4.895)
\psdots[linecolor=black, dotsize=0.1](16.22,-5.135)
\psdots[linecolor=black, dotstyle=o, dotsize=0.4, fillcolor=white](15.2,2.725)
\psdots[linecolor=black, dotstyle=o, dotsize=0.4, fillcolor=white](16.8,0.325)
\psdots[linecolor=black, dotstyle=o, dotsize=0.4, fillcolor=white](15.2,-6.075)
\psdots[linecolor=black, dotsize=0.4](11.6,-0.875)
\psdots[linecolor=black, dotsize=0.4](12.4,-0.075)
\psdots[linecolor=black, dotsize=0.4](14.4,1.925)
\psdots[linecolor=black, dotsize=0.4](12.0,-1.275)
\psdots[linecolor=black, dotsize=0.4](13.2,-0.875)
\psdots[linecolor=black, dotsize=0.4](15.6,-0.075)
\psline[linecolor=black, linewidth=0.08](10.8,-1.675)(16.8,-3.675)(16.8,-3.675)
\psline[linecolor=black, linewidth=0.08](0.8,-1.675)(6.8,-3.675)(6.8,-3.675)
\psdots[linecolor=black, dotstyle=o, dotsize=0.4, fillcolor=white](16.8,-3.675)
\psdots[linecolor=black, dotstyle=o, dotsize=0.4, fillcolor=white](10.8,-1.675)
\psdots[linecolor=black, dotstyle=o, dotsize=0.4, fillcolor=white](6.8,-3.675)
\psdots[linecolor=black, dotstyle=o, dotsize=0.4, fillcolor=white](0.8,-1.675)
\psdots[linecolor=black, dotsize=0.4](12.0,-2.075)
\psdots[linecolor=black, dotsize=0.4](13.2,-2.475)
\psdots[linecolor=black, dotsize=0.4](15.6,-3.275)
\psdots[linecolor=black, dotsize=0.4](11.6,-2.475)
\psdots[linecolor=black, dotsize=0.4](12.4,-3.275)
\psdots[linecolor=black, dotsize=0.4](14.4,-5.275)
\psdots[linecolor=black, dotstyle=o, dotsize=0.4, fillcolor=white](13.2,0.725)
\psdots[linecolor=black, dotstyle=o, dotsize=0.4, fillcolor=white](14.4,-0.475)
\psdots[linecolor=black, dotstyle=o, dotsize=0.4, fillcolor=white](14.4,-2.875)
\psdots[linecolor=black, dotstyle=o, dotsize=0.4, fillcolor=white](13.2,-4.075)
\rput[bl](10.4,-0.875){$x_1^{\{w,u_1\}}$}
\rput[bl](11.92,-1.095){$x_1^{\{w,u_2\}}$}
\rput[bl](12.14,-1.995){$x_1^{\{w,u_3\}}$}
\rput[bl](10.4,-3.275){$x_1^{\{w,u_{\Delta}\}}$}
\rput[bl](11.3,0.025){$x_2^{\{w,u_1\}}$}
\rput[bl](13.28,-1.435){$x_2^{\{w,u_2\}}$}
\rput[bl](13.2,-2.295){$x_2^{\{w,u_3\}}$}
\rput[bl](11.32,-4.075){$x_2^{\{w,u_{\Delta}\}}$}
\rput[bl](12.22,-4.815){$x_3^{\{w,u_{\Delta}\}}$}
\rput[bl](13.32,-5.935){$x_4^{\{w,u_{\Delta}\}}$}
\rput[bl](12.26,0.945){$x_3^{\{w,u_1\}}$}
\rput[bl](13.9,-0.275){$x_3^{\{w,u_2\}}$}
\rput[bl](14.28,-2.635){$x_3^{\{w,u_3\}}$}
\rput[bl](13.32,2.005){$x_4^{\{w,u_1\}}$}
\rput[bl](15.56,-0.715){$x_4^{\{w,u_2\}}$}
\rput[bl](15.52,-3.075){$x_4^{\{w,u_3\}}$}
\end{pspicture}
}
		\end{center}
		\caption{Vertex $w$ with maximum degree $\Delta$ and corresponding superedges in $G^{\frac{1}{5}}$ } \label{Delta15}
	\end{figure}
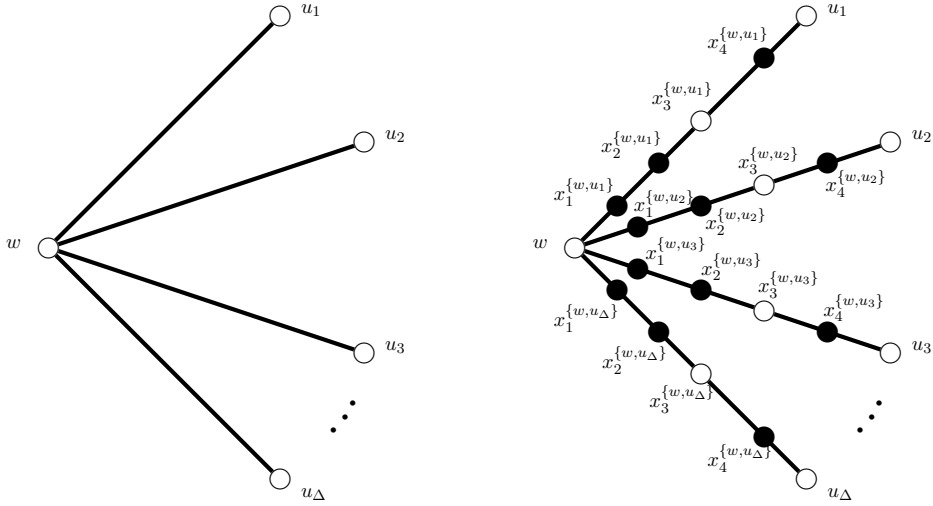

	\begin{theorem}\label{G15}
Let $G$ be a graph and $\Delta$ be the maximum degree of its vertices. Then,
 $$2|E(G)|+1 \leq \gamma_s(G^{\frac{1}{5}})\leq  3|E(G)| - \Delta +1.$$
	\end{theorem}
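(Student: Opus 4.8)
The plan is to prove the two inequalities separately, treating the upper bound as an explicit construction and the lower bound as the substantial part.

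For the upper bound $\gamma_s(G^{\frac{1}{5}})\le 3|E(G)|-\Delta+1$, I would first exhibit a baseline secure dominating set of size $3|E(G)|$ and then improve it at a vertex of maximum degree. On every superedge, with internal vertices $x_1,x_2,x_3,x_4$ listed in order, put the three vertices $x_1,x_3,x_4$ into $D_0$. A direct check shows $D_0$ is secure dominating: every internal vertex and every original vertex is dominated (each original vertex by the first internal vertex of each incident superedge), and the only unchosen vertices, namely the $x_2$'s and the original vertices, all admit a defender (for an original vertex of degree at least two one uses any incident $x_1$, whose removal is harmless because the shared endpoint is still covered by the other incident $x_1$'s; each $x_2$ is defended by $x_3$, which has no private neighbour). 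Now fix $w$ with $\deg_G(w)=\Delta$, insert $w$ into the set, and on each of the $\Delta$ superedges at $w$ replace the triple by the pair $\{x_2,x_4\}$, where $x_1$ denotes the internal vertex adjacent to $w$. Since $w$ now dominates each near vertex $x_1$, and keeping $x_2$ guarantees that $x_1$ still has a legitimate defender (namely $x_2$, whose removal no longer destroys domination because $x_1$ is covered by $w$), the modified set remains secure dominating; its size is $3(|E|-\Delta)+2\Delta+1=3|E|-\Delta+1$. The one point needing care is that $x_1$ cannot instead be defended by swapping with $w$ (this fails as soon as $\Delta\ge 2$, since removing $w$ orphans the other near vertices), so retaining $x_2$ is essential.

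For the lower bound $\gamma_s(G^{\frac{1}{5}})\ge 2|E(G)|+1$, let $D$ be any secure dominating set, let $S_e$ be the four internal vertices of a superedge $e$, set $d_e=|D\cap S_e|$, and let $W=D\cap V(G)$. The local observations are: since the two central internal vertices of $e$ have all their neighbours inside $S_e$, always $d_e\ge 1$; and if $d_e=1$ the chosen vertex must be one of the two central vertices, domination forces both endpoints of $e$ into $D$, and the secure defence of the two near-end vertices (whose only possible defenders are the respective endpoints) forces every \emph{other} superedge at each endpoint to carry a chosen vertex among its two vertices nearest that endpoint. Call a superedge with $d_e=1$ an $A$-edge and let $A$ be their set; then $\sum_e d_e\ge |A|+2(|E|-|A|)=2|E|-|A|$, so $|D|=\sum_e d_e+|W|\ge 2|E|-|A|+|W|$, and it remains to show $|W|\ge |A|+1$.

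The key step, and the one I expect to be the main obstacle, is to control $A$ against the sharing of original vertices. Every $A$-edge has both endpoints in $W$, so the $A$-edges form a subgraph of $G$ on a vertex set contained in $W$, and the heart of the argument is that this subgraph is \emph{acyclic}. Indeed, if two $A$-edges share an endpoint $u$, the defence constraints force each of them to place its unique internal vertex within distance two of $u$, i.e.\ at the central vertex nearer $u$; following a putative cycle of $A$-edges, some $A$-edge would be required to have its single central vertex simultaneously nearest both of its endpoints, which is impossible. Hence the $A$-edges form a forest on a subset of $W$, giving $|A|\le |W|-1$ whenever $A\neq\emptyset$ and therefore $|D|\ge 2|E|+1$. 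When $A=\emptyset$ the estimate reads $|D|\ge 2|E|+|W|$, which already finishes unless $W=\emptyset$ and every $d_e=2$; this residual configuration must be excluded by a separate local argument, showing that with no original vertex chosen some central internal vertex is flanked by two chosen vertices neither of which can be swapped out (each being the sole dominator of a further vertex), so the set is not secure. Combining the three cases yields $\gamma_s(G^{\frac{1}{5}})\ge 2|E|+1$. Throughout, the delicate points are exactly the accounting for shared original vertices and the acyclicity of $A$, rather than any single computation.
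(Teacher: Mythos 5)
Your upper bound is correct and is essentially the paper's argument: a three-vertices-per-superedge secure dominating set, improved at a vertex $w$ of maximum degree by trading the $\Delta$ internal vertices nearest $w$ for $w$ itself. (The paper's baseline is $\{x_1,x_2,x_4\}$ rather than your $\{x_1,x_3,x_4\}$, but the modified set at $w$ and the count $3|E(G)|-\Delta+1$ coincide.) Your lower bound, by contrast, is a genuinely different and far more structured route than the paper's, which only rules out a few specific $2|E(G)|$-vertex configurations on a pair of adjacent superedges and then gestures at the example $S_4$. Your accounting $|D|\ge 2|E|-|A|+|W|$, the forcing of both endpoints of an $A$-edge into $D$ (note this uses the secure condition, not mere domination, for the endpoint on the near side of the chosen central vertex), and the acyclicity of the $A$-edges are all correct: an $A$-edge both of whose endpoints are shared with other $A$-edges would indeed need its single chosen vertex at distance two from both ends.

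The genuine gap is the case you defer, $A=\emptyset$ and $W=\emptyset$ with every $d_e=2$, and it cannot be closed, because that configuration is realizable. Take $G=K_4$ on $\{a,b,c,d\}$ and, for each edge $pq$, write $pq_1,\dots,pq_4$ for the internal vertices with $pq_i$ at distance $i$ from $p$. Let
$D=\{ab_2,ab_4,\,bc_2,bc_4,\,cd_2,cd_4,\,ad_1,ad_3,\,ac_1,ac_4,\,bd_1,bd_4\}$,
i.e.\ the superedges $ab,bc,cd,ad$ carry the pattern ``distances $1$ and $3$'' measured from $b,c,d,a$ respectively, and $ac,bd$ carry their two end-adjacent internal vertices. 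A direct check shows $D$ is a secure dominating set of $K_4^{\frac{1}{5}}$ of size $12=2|E(K_4)|$. The step your sketch relies on --- that some central vertex flanked by two chosen vertices has no usable defender --- fails here: the flanked vertex $ab_3$ can be defended by $ab_4$, because after removing $ab_4$ the endpoint $b$ is still dominated by $bd_1$. In general the flanking vertex on the endpoint side is a legitimate defender whenever that endpoint has a second near-dominator on another superedge, and in $K_4$ every original vertex has two. So the residual case is not merely unproved; the inequality $\gamma_s(G^{\frac{1}{5}})\ge 2|E(G)|+1$ is false for $G=K_4$, and your proof (like the paper's) cannot be completed as stated. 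Your argument does go through whenever one can rule out $W=\emptyset$ with all $d_e=2$ --- for instance for trees, where the counting of ``distance $1,3$'' superedges needed to defend every original vertex already exceeds $|E|$ --- so the correct scope of the lower bound is exactly the question your final case leaves open.
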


	\begin{proof}
For every edge $uv\in G$, we consider the superedge $P^{\{u,v\}}$ in $G^{\frac{1}{5}}$. We choose $x_1^{\{u,v\}}$,$x_2^{\{u,v\}}$ and $x_4^{\{u,v\}}$ from the set $\{u,x_1^{\{u,v\}}\,x_2^{\{u,v\}},x_3^{\{u,v\}},x_4^{\{u,v\}},v\}$ and put in a new set, {{say}} $D$. {{ So,
$$D=\bigcup_{uv \in E(G)}\Big\{x_1^{\{u,v\}},x_2^{\{u,v\}},x_4^{\{u,v\}}\Big\}.$$
}}
{{Then}} $D$ is a secure dominating set for $G^{\frac{1}{5}}$. {{Now, consider the vertex $w$}} with degree $\Delta$ (See Figure \ref{Delta15}). We define a new set $D'$ as
 $$D'=D-\{x_1^{\{w,u_1\}},x_1^{\{w,u_2\}},x_1^{\{w,u_3\}},\ldots,x_1^{\{w,u_{\Delta}\}}\} \cup \{w\}.$$
 It is easy to see that $D'$ is a secure dominating set too. Therefore
  $$\gamma_s(G^{\frac{1}{5}})\leq  3|E(G)| - \Delta +1.$$
  Now, we consider the superedges $P^{\{u,v\}}$ and $P^{\{u,t\}}$ in $G^{\frac{1}{5}}$ as we see in Figure \ref{edge15}. In every twelve consecutive vertices, we need at least four vertices to have a dominating set. We have the following cases:
\begin{itemize}
\item[(i)] We choose $x_1^{\{u,v\}}$, $x_4^{\{u,v\}}$, $x_1^{\{u,t\}}$ and $x_4^{\{u,t\}}$ to put in domination set from these superedges. {{So}} we have a dominating set for $G^{\frac{1}{5}}$ with size at most $2|E(G)|$. On the other hand, by Proposition \ref{COC-pro}, $\gamma(G^{\frac{1}{5}})\leq\gamma_s(G^{\frac{1}{5}})$ but this set is not a secure dominating set because of vertex $u$ . 
\item[(ii)]  We choose $x_2^{\{u,v\}}$, $x_4^{\{u,v\}}$, $x_1^{\{u,t\}}$ and $x_4^{\{u,t\}}$ to put in domination set from these superedges. By the same argument {{as in the previous case}}, this process does not give us a secure dominating set because of {{the}} vertex  $x_2^{\{u,t\}}$.
\item[(iii)] Put $u$ in our set. Then for having a dominating set regarding these edges, we need at least 4 other vertices and this makes our set bigger than $2|E(G)|$.
\end{itemize}    
  So $\gamma_s(G^{\frac{1}{5}}) > 2|E(G)|$. {{Hence  $\gamma_s(G^{\frac{1}{5}}) \geq 2|E(G)|+1$, and we are done.}}
	\qed
	\end{proof}

\bigskip

	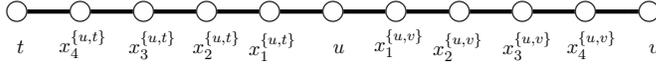
\begin{figure}
		\begin{center}
			\psscalebox{0.7 0.7}
{
\begin{pspicture}(0,-5.9693055)(12.42139,-4.827916)
\psline[linecolor=black, linewidth=0.08](0.20138885,-5.0293055)(7.4013886,-5.0293055)(7.4013886,-5.0293055)
\psline[linecolor=black, linewidth=0.08](7.4013886,-5.0293055)(11.801389,-5.0293055)(11.801389,-5.0293055)
\psdots[linecolor=black, dotstyle=o, dotsize=0.4, fillcolor=white](0.20138885,-5.0293055)
\psdots[linecolor=black, dotstyle=o, dotsize=0.4, fillcolor=white](1.4013889,-5.0293055)
\psdots[linecolor=black, dotstyle=o, dotsize=0.4, fillcolor=white](2.601389,-5.0293055)
\psdots[linecolor=black, dotstyle=o, dotsize=0.4, fillcolor=white](3.8013887,-5.0293055)
\psdots[linecolor=black, dotstyle=o, dotsize=0.4, fillcolor=white](5.001389,-5.0293055)
\psdots[linecolor=black, dotstyle=o, dotsize=0.4, fillcolor=white](6.201389,-5.0293055)
\psdots[linecolor=black, dotstyle=o, dotsize=0.4, fillcolor=white](7.4013886,-5.0293055)
\psdots[linecolor=black, dotstyle=o, dotsize=0.4, fillcolor=white](8.601389,-5.0293055)
\psdots[linecolor=black, dotstyle=o, dotsize=0.4, fillcolor=white](9.801389,-5.0293055)
\psdots[linecolor=black, dotstyle=o, dotsize=0.4, fillcolor=white](11.001389,-5.0293055)
\psline[linecolor=black, linewidth=0.08](11.801389,-5.0293055)(12.201389,-5.0293055)(12.201389,-5.0293055)
\psdots[linecolor=black, dotstyle=o, dotsize=0.4, fillcolor=white](12.201389,-5.0293055)
\rput[bl](6.201389,-5.8293056){$u$}
\rput[bl](12.201389,-5.8293056){$v$}
\rput[bl](0.20138885,-5.8293056){$t$}
\rput[bl](4.601389,-5.9693055){$x_1^{\{u,t\}}$}
\rput[bl](3.5413888,-5.9293056){$x_2^{\{u,t\}}$}
\rput[bl](2.321389,-5.9293056){$x_3^{\{u,t\}}$}
\rput[bl](1.0013889,-5.9093056){$x_4^{\{u,t\}}$}
\rput[bl](6.981389,-5.8893056){$x_1^{\{u,v\}}$}
\rput[bl](8.101389,-5.9693055){$x_2^{\{u,v\}}$}
\rput[bl](9.421389,-5.9493055){$x_3^{\{u,v\}}$}
\rput[bl](10.641389,-5.9293056){$x_4^{\{u,v\}}$}
\end{pspicture}
}
		\end{center}
		\caption{Superedges $P^{\{u,v\}}$ and $P^{\{u,t\}}$ in $G^{\frac{1}{5}}$} \label{edge15}
	\end{figure}

	\begin{figure}
		\begin{center}
			\psscalebox{0.7 0.7}
{
\begin{pspicture}(0,-6.8)(8.394231,1.594231)
\psdots[linecolor=black, dotsize=0.4](4.1971154,-2.6028845)
\psdots[linecolor=black, dotsize=0.4](4.1971154,-3.4028845)
\psdots[linecolor=black, dotsize=0.4](4.1971154,-4.2028847)
\psdots[linecolor=black, dotsize=0.4](4.1971154,-5.0028844)
\psdots[linecolor=black, dotsize=0.4](4.1971154,-5.8028846)
\psdots[linecolor=black, dotsize=0.4](4.1971154,-6.6028843)
\psdots[linecolor=black, dotsize=0.4](4.9971156,-1.8028846)
\psdots[linecolor=black, dotsize=0.4](5.7971153,-1.0028845)
\psdots[linecolor=black, dotsize=0.4](6.5971155,-0.20288453)
\psdots[linecolor=black, dotsize=0.4](7.3971157,0.59711546)
\psdots[linecolor=black, dotsize=0.4](8.197116,1.3971155)
\psdots[linecolor=black, dotsize=0.4](3.3971155,-1.8028846)
\psdots[linecolor=black, dotsize=0.4](2.5971155,-1.0028845)
\psdots[linecolor=black, dotsize=0.4](1.7971154,-0.20288453)
\psdots[linecolor=black, dotsize=0.4](0.9971155,0.59711546)
\psdots[linecolor=black, dotsize=0.4](0.19711548,1.3971155)
\psline[linecolor=black, linewidth=0.08](0.19711548,1.3971155)(4.1971154,-2.6028845)(4.1971154,-6.6028843)(4.1971154,-6.6028843)
\psline[linecolor=black, linewidth=0.08](4.1971154,-2.6028845)(8.197116,1.3971155)(8.197116,1.3971155)
\psdots[linecolor=black, dotstyle=o, dotsize=0.4, fillcolor=white](4.1971154,-2.6028845)
\psdots[linecolor=black, dotstyle=o, dotsize=0.4, fillcolor=white](7.3971157,0.59711546)
\psdots[linecolor=black, dotstyle=o, dotsize=0.4, fillcolor=white](0.9971155,0.59711546)
\psdots[linecolor=black, dotstyle=o, dotsize=0.4, fillcolor=white](4.1971154,-5.8028846)
\psdots[linecolor=black, dotstyle=o, dotsize=0.4, fillcolor=white](5.7971153,-1.0028845)
\psdots[linecolor=black, dotstyle=o, dotsize=0.4, fillcolor=white](2.5971155,-1.0028845)
\psdots[linecolor=black, dotstyle=o, dotsize=0.4, fillcolor=white](4.1971154,-4.2028847)
\end{pspicture}
}
		\end{center}
		\caption{Graph $S_4^{\frac{1}{5}}$ } \label{S415}
	\end{figure}
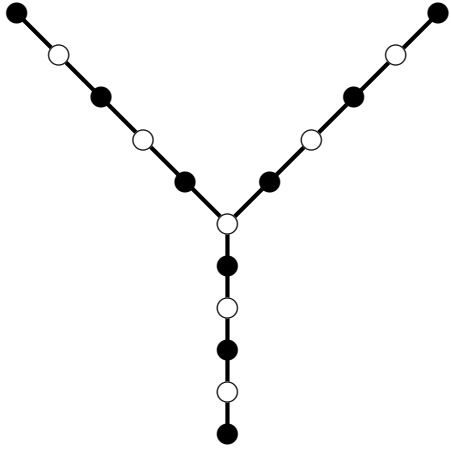

	\begin{remark}
Upper Bound in the Theorem \ref{G15} is sharp.  
{{It suffices to consider the star graph $S_n$. Then by the argument in the proof of Theorem \ref{G15}, and since from each three consecutive vertices, we need at least a vertex to be in dominating set, one can easily check that the equality holds. 
}}
\end{remark}

Now, we consider the general {{case}} $G^{\frac{1}{n}}$ for $n\geq 6$.

\begin{theorem}\label{G16}
Let $G$ be a graph and $n=7k+r$, where $k$ is a {{positive number}} and $r \in\{-1,1,3,5\}$. Then,
 $$\gamma_s(G^{\frac{1}{n}})=\gamma_s(P_{n+1})|E(G)|  .$$
	\end{theorem}

	\begin{proof}
First,	Let $n=7k+r$, where $k$ is a positive odd number and $r \in\{-1,1,3,5\}$. Consider {{the}} edge $uv\in V(G)$. As shown in Figure \ref{edge1nodd}, we consider $u$, $x_1^{\{u,v\}}$, $x_2^{\{u,v\}}$, $\ldots$, $x_{n-2}^{\{u,v\}}$, $x_{n-1}^{\{u,v\}}$, $v$ as vertex sequence of $P^{\{u,v\}}$. Now, we define
	\[
E_{uv}=\bigcup_{i=0}^{k-1} \Big\{ x_{7i+1}^{\{u,v\}},x_{7i+3}^{\{u,v\}},x_{7i+5}^{\{u,v\}}\Big\},
\]
and 
 \[
F_{uv}=\left\{
 \begin{array}{ll}
 {\displaystyle
 	\Big\{\Big\}}&
 \quad\mbox{if $r=-1$, }\\[15pt]
 {\displaystyle
 	\Big\{ x_{n-1}^{\{u,v\}} \Big\}}&
 \quad\mbox{if $r=1$,}\\[15pt]
  {\displaystyle
 	\Big\{ x_{n-3}^{\{u,v\}}, x_{n-1}^{\{u,v\}} \Big\}}&
 \quad\mbox{if $r=3$,}\\[15pt]
  {\displaystyle
 	\Big\{ x_{n-5}^{\{u,v\}}, x_{n-3}^{\{u,v\}}, x_{n-1}^{\{u,v\}} \Big\}}&
 \quad\mbox{if $r=5$.}
 \end{array}
 \right.	
 \]
 
 Let $D_{uv}=E_{uv}\cup F_{uv}$ and 
 	\[
D=\bigcup_{uv\in E(G)} D_{uv}.
\]
It is easy to see that $D$ is a secure dominating set for $G^{\frac{1}{n}}$. So $\gamma_s(G^{\frac{1}{n}})\leq |E(G)|\lceil \frac{3n+3}{7} \rceil$. Hence $\gamma_s(G^{\frac{1}{n}})\leq \gamma_s(P_{n+1})|E(G)|$ {{because of Theorem \ref{COC}}}. Now, we show that we can not use less vertices to make a secure dominating set. In the way we choose $D$, we {{cannot}} choose less vertices among $x_2^{\{u,v\}},\ldots, x_{n-2}^{\{u,v\}}$ for each $P^{\{u,v\}}$. {{We}} use vertices $u$, $v$ and $w$ that
are {{connected as in}} Figure \ref{edge1nodd}.
We only can remove  $x_1^{\{v,w\}}$ and $x_{n-1}^{\{u,v\}}$ from $D$ and put $v$ in it to have a dominating set. So $D'=D-\{x_1^{\{v,w\}},x_{n-1}^{\{u,v\}}\}\cup\{v\}$ is a dominating set which clearly is not a secure dominating set. 
{{By the same argument for $n=7k+r$, where $k$ is a positive even number and $r \in\{-1,1,3,5\}$.}}
\qed
	\end{proof}

	\begin{figure}
		\begin{center}
			\psscalebox{0.7 0.7}
{
\begin{pspicture}(0,-5.1593056)(17.997116,-4.0379167)
\psdots[linecolor=black, dotsize=0.4](0.19711533,-4.2393055)
\psdots[linecolor=black, dotsize=0.4](12.197115,-4.2393055)
\psline[linecolor=black, linewidth=0.08](0.19711533,-4.2393055)(6.997115,-4.2393055)(6.997115,-4.2393055)
\psline[linecolor=black, linewidth=0.08](8.5971155,-4.2393055)(12.197115,-4.2393055)(12.197115,-4.2393055)
\psdots[linecolor=black, dotstyle=o, dotsize=0.4, fillcolor=white](10.5971155,-4.2393055)
\psdots[linecolor=black, dotstyle=o, dotsize=0.4, fillcolor=white](8.997115,-4.2393055)
\psdots[linecolor=black, dotstyle=o, dotsize=0.4, fillcolor=white](6.5971155,-4.2393055)
\psdots[linecolor=black, dotstyle=o, dotsize=0.4, fillcolor=white](4.997115,-4.2393055)
\psdots[linecolor=black, dotstyle=o, dotsize=0.4, fillcolor=white](3.3971152,-4.2393055)
\psdots[linecolor=black, dotstyle=o, dotsize=0.4, fillcolor=white](1.7971153,-4.2393055)
\psdots[linecolor=black, dotsize=0.1](7.397115,-4.2393055)
\psdots[linecolor=black, dotsize=0.1](7.7971153,-4.2393055)
\psdots[linecolor=black, dotsize=0.1](8.197115,-4.2393055)
\rput[bl](0.037115324,-5.0193057){$u$}
\rput[bl](12.057116,-5.0793056){$v$}
\rput[bl](1.2571154,-5.0793056){$x_1^{\{u,v\}}$}
\rput[bl](2.9971154,-5.0793056){$x_2^{\{u,v\}}$}
\rput[bl](4.437115,-5.1593056){$x_3^{\{u,v\}}$}
\rput[bl](6.0971155,-5.1193056){$x_4^{\{u,v\}}$}
\rput[bl](8.497115,-5.1193056){$x_{n-2}^{\{u,v\}}$}
\rput[bl](10.1371155,-5.1593056){$x_{n-1}^{\{u,v\}}$}
\psline[linecolor=black, linewidth=0.08](12.197115,-4.2393055)(14.197115,-4.2393055)(13.797115,-4.2393055)
\psline[linecolor=black, linewidth=0.08](15.797115,-4.2393055)(17.397116,-4.2393055)(17.397116,-4.2393055)
\psdots[linecolor=black, dotsize=0.1](14.5971155,-4.2393055)
\psdots[linecolor=black, dotsize=0.1](14.997115,-4.2393055)
\psdots[linecolor=black, dotsize=0.1](15.397116,-4.2393055)
\psdots[linecolor=black, dotsize=0.4](17.797115,-4.2393055)
\psdots[linecolor=black, dotstyle=o, dotsize=0.4, fillcolor=white](13.797115,-4.2393055)
\psdots[linecolor=black, dotstyle=o, dotsize=0.4, fillcolor=white](16.197115,-4.2393055)
\psline[linecolor=black, linewidth=0.08](17.397116,-4.2393055)(17.797115,-4.2393055)(17.797115,-4.2393055)
\rput[bl](17.697115,-4.9993057){$w$}
\rput[bl](13.357116,-5.0993056){$x_1^{\{v,w\}}$}
\rput[bl](15.777115,-5.1193056){$x_{n-1}^{\{v,w\}}$}
\end{pspicture}
}
		\end{center}
		\caption{Superedge $P^{\{u,v\}}$ and $P^{\{v,w\}}$ in $G^{\frac{1}{n}}$ related to the proof of Theorem \ref{G16}} \label{edge1nodd}
	\end{figure}
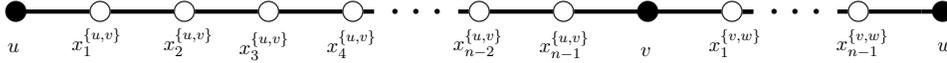

Now, we consider the other cases and present some bounds for them:

\begin{theorem}
Let $G$ be a graph and $n=7k+r$, where $k$ is a positive number and $r \in\{0,2,4\}$. Then,
 $$|V(G)|+ \gamma_s(P_{n-3})|E(G)| \leq\gamma_s(G^{\frac{1}{n}})\leq\gamma_s(P_{n+1})|E(G)|  .$$
	\end{theorem}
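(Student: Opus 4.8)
The plan is to prove the two inequalities separately: the upper bound by copying the tiling construction of Theorem \ref{G16} with a modified tail, and the lower bound by a charging argument whose delicate point is the interaction between original vertices and the ends of the superedges.

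For the upper bound $\gamma_s(G^{\frac{1}{n}})\le \gamma_s(P_{n+1})|E(G)|$, I would repeat the proof of Theorem \ref{G16} almost verbatim, changing only the tail set $F_{uv}$ to suit $r\in\{0,2,4\}$. Keeping $E_{uv}=\bigcup_{i=0}^{k-1}\{x_{7i+1}^{\{u,v\}},x_{7i+3}^{\{u,v\}},x_{7i+5}^{\{u,v\}}\}$, one appends to each superedge one extra vertex when $r=0$, two when $r=2$, and three when $r=4$, chosen from the last internal vertices $x_{n-1}^{\{u,v\}},x_{n-3}^{\{u,v\}},x_{n-5}^{\{u,v\}}$, so that the resulting $D_{uv}$ is a secure dominating set of the superedge consisting of internal vertices only. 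A short check, identical to the one in Theorem \ref{G16}, shows that $|D_{uv}|=\lceil \frac{3(n+1)}{7}\rceil=\gamma_s(P_{n+1})$ and that $D=\bigcup_{uv\in E(G)}D_{uv}$ is a secure dominating set of $G^{\frac{1}{n}}$; since the sets $D_{uv}$ are pairwise disjoint this yields the claimed bound. The only point needing attention is that each original vertex, never placed in $D$, is dominated and securely dominated through the neighbouring $x_1^{\{u,v\}}$, which is verified exactly as before.

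For the lower bound $\gamma_s(G^{\frac{1}{n}})\ge |V(G)|+\gamma_s(P_{n-3})|E(G)|$, let $D$ be a minimum secure dominating set and, for each edge $e=uv$, call $C_e=\{x_2^{e},\ldots,x_{n-2}^{e}\}$ the \emph{core}, a copy of $P_{n-3}$. The scheme is to argue that $D$ must devote $\gamma_s(P_{n-3})$ vertices to each core and, in addition, one vertex to each original vertex, with these contributions disjoint. The original-vertex part is easy: charge $w\in V(G)$ to itself if $w\in D$, and otherwise to a neighbour $x_1^{\{w,\cdot\}}\in D$ dominating it, which exists because $D$ dominates $G^{\frac{1}{n}}$. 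Since the only original-vertex neighbour of $x_1^{\{w,u\}}$ is $w$, distinct original vertices receive distinct charges, giving an injection $\sigma$ from $V(G)$ into $D$ whose image lies in $V(G)\cup\{x_1^{e},x_{n-1}^{e}:e\in E(G)\}$, hence outside every core.

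The hard part, and the main obstacle, is the core estimate, because a single core need not by itself contain $\gamma_s(P_{n-3})$ vertices of $D$: the boundary vertices $x_1^{e},x_{n-1}^{e}$ (and the adjacent original vertices) can dominate and secure $x_2^{e}$ and $x_{n-2}^{e}$ from outside, so a core may hold strictly fewer than $\gamma_s(P_{n-3})$ vertices of $D$. I would therefore run the count globally through a discharging argument in which any deficit of a core below $\gamma_s(P_{n-3})$ is paid for by the very boundary vertices $x_1^{e},x_{n-1}^{e}$ that created the saving, after checking by a short case analysis of secure domination at the core endpoints that such compensating vertices are always present. The genuinely subtle issue is that these compensating vertices are exactly the ones that $\sigma$ may also use, so one must show that whenever $x_1^{e}$ (or $x_{n-1}^{e}$) both compensates a core deficit and serves as $\sigma(u)$ for its nearby original vertex, the secure-domination constraints at $u$ force an additional vertex of $D$ that absorbs the collision. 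Verifying that this compensation is always exactly sufficient, so that the per-core requirement can be restored to $\gamma_s(P_{n-3})$ without clashing with the $|V(G)|$ injection, is what makes the residues $r\in\{0,2,4\}$ yield only a bound rather than the exact value obtained in Theorem \ref{G16}.
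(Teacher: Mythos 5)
Your upper bound is essentially the paper's: cover each superedge by a secure dominating set of the induced path $P_{n+1}$ and take the union over all edges of $G$; that part is fine. The problem is the lower bound, where you have correctly located the difficulty but not resolved it. The entire content of the inequality $\gamma_s(G^{\frac{1}{n}})\ge |V(G)|+\gamma_s(P_{n-3})|E(G)|$ sits in the discharging step you defer: you concede that a core $C_e$ may contain strictly fewer than $\gamma_s(P_{n-3})$ vertices of a minimum secure dominating set because its endpoints $x_2^{e}$ and $x_{n-2}^{e}$ can be dominated and defended from outside; you concede that the boundary vertices $x_1^{e}, x_{n-1}^{e}$ that would pay for this deficit are exactly the vertices your injection $\sigma$ from $V(G)$ may already consume; and you then assert, without carrying out the case analysis, that the secure-domination constraints always force an additional vertex of $D$ that absorbs the collision. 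That assertion \emph{is} the theorem. As written, the proposal is a plan for a proof rather than a proof, and nothing in it rules out a configuration in which one vertex of $D$ simultaneously serves as $\sigma(w)$ for an original vertex $w$ and closes the deficits of several incident cores.

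For what it is worth, your route is genuinely different from the paper's, and potentially more rigorous. The paper obtains the lower bound by constructing one particular dominating set of size $|V(G)|+\gamma_s(P_{n-3})|E(G)|$ and then invoking Proposition \ref{COC-pro}; but exhibiting a single dominating set of a given size bounds the domination number from \emph{above} and yields no lower bound on $\gamma_s$, so the paper's argument does not establish the inequality either. A completed version of your charging scheme would repair this; in its current state it stops exactly where the work begins.
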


\begin{proof}
First {{consider}} every superedge $P^{\{u,v\}}$ and choose a secure dominating set for {{it}}, and then put all these vertices in a set. Therefore we have a secure dominating set for $G$ with size at most $\gamma_s(P_{n+1})|E(G)|  $. Now, for finding a lower bound, first we put every vertex of $G$ in our set $D$ but we do not consider their neighbours. Then we have a path $P_{n-3}$ for every superedge and we should choose between these vertices. Obviously we can choose  $\gamma_s(P_{n-3})$ of these vertices and add to $D$. {{Clearly, $D$ is a dominating set which is not necessarily secure in some cases}}. Now, by Proposition \ref{COC-pro}, we conclude that $\gamma_s(G^{\frac{1}{n}})\geq |V(G)|+ \gamma_s(P_{n-3})|E(G)|$, and therefore we have the result.
\qed
	\end{proof}

\medskip

At the beginning of this section, we presented a sharp upper bound for secure domination number of $G^{\frac{1}{2}}$ in Theorem  \ref{G12}. There are some graphs $G$, which show that $\gamma_s(G^{\frac{1}{2}})< min \{ |E(G)|, |V(G)|\}$. For example, consider path graph $P_{11}$. Then $P_{11}^{\frac{1}{2}}=P_{21}$, and by Theorem \ref{COC}, $\gamma_s(P_{11}^{\frac{1}{2}})=9$. So, this inspires us to find a lower bound for $\gamma_s(G^{\frac{1}{2}})$. We end this section with the following conjecture:

\begin{conjecture}\label{Conj}
For every graph $G$, $\gamma_s(G^{\frac{1}{2}})>\frac{4}{5}|V(G)|$.
\end{conjecture}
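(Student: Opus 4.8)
The plan is to work directly with the bipartite structure of $H=G^{\frac{1}{2}}$. One side is the set $V=V(G)$ of \emph{old} vertices, whose degrees are inherited from $G$, and the other side is the set $X$ of \emph{subdivision} vertices, one per edge of $G$, each of degree exactly $2$. Fix a minimum secure dominating set $D$ and write $a=|D\cap V|$ and $b=|D\cap X|$, so $\gamma_s(H)=a+b$ and the goal is $a+b>\frac{4}{5}n$ with $n=|V(G)|$. It is convenient to recolour $G$ itself: call a vertex $u$ \emph{in} if $u\in D$ and \emph{out} otherwise, and call an edge $uv$ \emph{in} if $x_1^{\{u,v\}}\in D$ and \emph{out} otherwise, so $a$ is the number of in-vertices and $b$ the number of in-edges. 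Domination of $H$ then says: every out-vertex is incident with at least one in-edge, and every out-edge has at least one in-vertex endpoint. Counting the $n-a$ out old vertices against the at most two old endpoints of each in-edge gives $n-a\le 2b$, hence $\gamma_s(H)\ge n/2$. The whole difficulty is to raise the constant from $\frac12$ to $\frac45$, which must come from the secure swap condition, since domination alone cannot do better.

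The second step I would take is to localise the swap condition. For an out-vertex $u$, a defending neighbour must be an in-edge $uv'$ for which $v'$ remains dominated after $u$ is swapped in and $x_1^{\{u,v'\}}$ is swapped out; as the only possible dominators of $v'$ are its incident subdivision vertices, this forces $v'$ to be an in-vertex or to carry a second incident in-edge. In particular, if an out-vertex $u$ meets exactly one in-edge $uv$, then $v$ must be \emph{doubly covered} in this sense. Consequently a single in-edge cannot serve as the sole dominator of both of its out-endpoints: using $x_1^{\{u,v\}}$ to defend $u$ needs $v$ doubly covered and symmetrically for $v$, which is impossible when $x_1^{\{u,v\}}$ is the unique in-edge at each of $u,v$. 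This is precisely the rigidity that a pure dominating set lacks, and it is what should pay for the improved constant.

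With this in hand, the core of the argument is a discharging scheme on the $n$ old vertices, each given initial charge $1$. Charge is sent from out-vertices to the in-edges that dominate them and, where the defender analysis above demands it, to the nearby in-vertices forced to be doubly covered, while in-vertices retain their own unit. The target is the uniform bound that every element of $D$ finishes with charge at most $\frac54$, which gives $n\le \frac54(a+b)$ and hence $\gamma_s(H)\ge\frac45 n$. The constant $\frac54$ is forced and sharp: for $G=P_5$ we have $P_5^{\frac12}=P_9$ and, by Theorem~\ref{COC}, $\gamma_s(P_9)=\lceil 27/7\rceil=4=\frac45|V(P_5)|$, with $D=X$ distributing five units of charge evenly over four in-edges at exactly $\frac54$ apiece. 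So equality is genuinely attained, and any discharging must be engineered so that the $\frac54$ bound is met with equality only in such $P_5$-type configurations.

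The main obstacle is twofold. First, proving the per-element bound $\frac54$ uniformly requires controlling the swap condition around in-vertices of high degree: swapping out an in-vertex $u$ in order to defend one incident out-edge leaves every \emph{other} out-edge at $u$ undominated unless its far endpoint is itself an in-vertex, so a high-degree in-vertex imposes conditions on its whole second neighbourhood, and these must be accounted for globally so that no in-edge or in-vertex is overcharged. Second, the strictness in the statement is the delicate point, precisely because of the $P_5$ equality above: the discharging as described yields only $\gamma_s(G^{\frac12})\ge\frac45|V(G)|$, and upgrading to a strict inequality forces one to characterise the equality graphs (those assembled from $P_5$ blocks) and to show that any graph outside this family loses at least one unit somewhere in the redistribution. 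Establishing that characterisation, rather than the inequality itself, is where I expect the real work — and the genuine difficulty of the conjecture — to lie.
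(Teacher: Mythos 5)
First, a point of order: the statement you are addressing is Conjecture~\ref{Conj}, which the paper leaves open --- there is no proof in the paper to compare against, so your attempt stands or falls on its own. It falls, and for a reason your own analysis nearly uncovers: the conjecture as stated is false, so no discharging scheme can establish it. You correctly compute that $P_5^{\frac{1}{2}}=P_9$ and $\gamma_s(P_9)=\lceil 27/7\rceil=4=\frac{4}{5}|V(P_5)|$; this is already a counterexample to the strict inequality $\gamma_s(G^{\frac{1}{2}})>\frac{4}{5}|V(G)|$, not merely an ``equality case'' to be characterised later. Worse, $P_4$ kills even the non-strict bound that your discharging targets: $P_4^{\frac{1}{2}}=P_7$ and, by Theorem~\ref{COC}, $\gamma_s(P_7)=3<\frac{16}{5}=\frac{4}{5}|V(P_4)|$ (the paper itself records $\gamma_s(P_4^{\frac{1}{2}})=3$ in the remark following Theorem~\ref{G12}). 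Concretely, the three subdivision vertices of $P_4^{\frac{1}{2}}$ form a secure dominating set, so in your scheme four units of charge must be absorbed by three in-edges, i.e.\ $\frac{4}{3}>\frac{5}{4}$ per element; the uniform per-element bound of $\frac{5}{4}$ on which the whole argument rests is simply not available.

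Second, even setting the counterexamples aside, the proposal is a plan rather than a proof. The opening reduction (the bipartite recolouring, the inequality $n-a\le 2b$, and the observation that a defender $x_1^{\{u,v'\}}$ forces $v'$ to be doubly covered) is correct and is a sensible way to begin, but the discharging rules are never written down, the claimed bound of $\frac{5}{4}$ per element of $D$ is announced as a ``target'' and never verified, and the case you yourself identify as the main obstacle --- an in-vertex of high degree whose removal in a swap must leave its entire second neighbourhood dominated --- is flagged but not handled. The realistic salvage is to repair the statement first: determine the true extremal constant $\inf_G \gamma_s(G^{\frac{1}{2}})/|V(G)|$, which your $P_4$ computation shows is at most $\frac{3}{4}$, and then prove a non-strict bound with that constant; the structural observations in your second paragraph would be the right starting point for such a proof.
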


\section{Conclusions}

In this paper, we obtained the secure domination number of $k$-subdivision of graphs for some cases and presents some lower and upper bounds for other ones. 
Future topics of interest for future research include the following suggestions:

\begin{itemize}
\item[•]
Proving Conjecture \ref{Conj} or finding a better lower bound for $\gamma_s(G^{\frac{1}{2}})$.
\item[•]
What is the exact value of $\gamma_s(G^{\frac{1}{n}})$ for  $n=7k+r$, where $k$ is a positive integer value and $r \in\{0,2,4\}$?
\end{itemize}


\end{document}